\newtheorem{thm}{Theorem}
\newtheorem{lem}{Lemma}
\newtheorem{cor}{Corollary}
\theoremstyle{definition}
\newtheorem{remark}{Remark}
\newcommand{\D}{\mathbb D}
\newcommand{\N}{\mathbb N}
\newcommand{\Z}{\mathbb Z}
\newcommand{\Un}{1\hskip -1.25mm 1}
\begin{document}
\title[Quasihomogeneous Toeplitz Operators]{Quasihomogeneous Toeplitz Operators on the Harmonic Bergman Space}

\date{\today}

\author{Issam Louhichi} 
\address{King Fahd University of Petroleum \& Minerals\\
 Department of Mathematics \& Statistics\\
 Dhahran 31261, Saudi Arabia}
\email{issam@kfupm.edu.sa}

\author{Lova Zakariasy}
\thanks{The second author was partially supported by \emph{Agence Universitaire de la Francophonie}}
\address{High Institute of Technology\\201 Antsiranana\\Madagascar}
\email{lova.zakariasy@moov.mg}

\subjclass[2010]{Primary 47B35; Secondary 47B38}
\keywords{Toeplitz operator, harmonic Bergman space, quasihomogeneous symbol, Mellin transform}

\begin{abstract} In this paper we study the product of Toeplitz operators on the harmonic Bergman space of the unit disk of the complex plane $\mathbb{C}$. Mainly, we discuss when the product of two quasihomogeneous Toeplitz operators is also a Toeplitz operator, and when such operators commute.
\end{abstract}

\maketitle

\section{Introduction}
Let $L^2(\D,dA)$ be the space of all square integrable functions on the unit disk $\D$  with respect to the normalized Lebesgue measure $\displaystyle{dA=rdr\frac{d\theta}{\pi}}$. The harmonic Bergman space, denoted by $L^2_h$, is the closed subspace of  $L^2(\D,dA)$ consisting of all harmonic functions on $\D$. It is well known that $L^2_h$ is a Hilbert space with the set $\{\sqrt{n+1}\,z^n\}_{n=0}^\infty \cup \{\sqrt{n+1}\,\overline{z}^n\}_{n=1}^\infty$ as an orthonormal basis. Let $Q$ be the orthogonal projection of $L^2(\D,dA)$ onto $L^2_h$. For a bounded function $f$ on $\D$, the Toeplitz operator $T_f$ with symbol $f$ is defined by 
	\[T_f(u) = Q(fu), \text{ for } u \in L^2_h.\]

The function $f$ defined on $\D$ is said to be quasihomogeneous of degree $p$, if it can be written as
$f(re^{i\theta}) = e^{ip\theta}\phi(r)$
where $p$ is an integer and $\phi$ is a radial function on $\D$.  In this case, the associated Toeplitz operator $T_f$  is also called quasihomogeneous Toeplitz of degree $p$. Quasihomogeneous Toeplitz operators were first introduced by the authors while generalizing the results of \cite{cr}.

A major goal in the theory of Toeplitz operators on the Bergman space over $\mathbb{D}$  is to completely describe the commutant of a given Toeplitz operator, that is, the set of all Toeplitz operators that commute with it. Choe and Lee first in \cite{cl1} and \cite{cl2}, then recently Ding in \cite{D}, studied the commutants of Toeplitz operators with harmonic symbol, defined  on $L^2_h$. In this paper, we present new results about the commutant of a given quasihomogeneous Toeplitz operator. We  shall start by  studying the product of such operator with a radial Toeplitz operator. Then, we shall highlight the relationship between the symbols of two commuting quasihomogeneous Toeplitz operators of positive degrees.

 Before we state our results, we need to introduce the Mellin transform which
is going to be our main tool. The Mellin transform $\widehat{f}$
of a radial function $f$ in $L^1([0,1],rdr)$ is defined by
$$\widehat{f}(z)=\int_{0}^{1} f(r) r^{z-1}\,dr.$$
It is well known that, for these functions, the Mellin transform
is well defined on the right half-plane $\{z : \Re z\geq 2\}$ and
it is analytic on $\{z:\Re z>2\}$. It is important and helpful to
know that the Mellin transform $\widehat{f}$ is uniquely
determined by its values on any arithmetic sequence of integers.
In fact we have the following classical theorem \cite[p.102]{rem}.

\begin{thm}
\label{blk1}
 Suppose that $f$ is a bounded analytic function on
$\{ z : \Re z>0\}$ which vanishes at the pairwise distinct points
$z_1, z_2 \cdots$, where
\begin{itemize}
\item[i)] $\inf\{|z_n|\}>0$\\
 and
\item[ii)] $\sum_{n\ge 1}\Re(\frac{1}{z_n})=\infty$.
\end{itemize}
Then $f$ vanishes identically on $\{ z : \Re z>0\}$.
\end{thm}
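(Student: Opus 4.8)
The plan is to transfer the problem from the right half-plane to the unit disk by means of the Cayley transform and then to invoke the classical Blaschke condition for bounded analytic functions on $\D$. Concretely, I would set $\varphi(z)=\frac{z-1}{z+1}$, which maps the half-plane $\{\Re z>0\}$ conformally and bijectively onto $\D$, with inverse $\varphi^{-1}(w)=\frac{1+w}{1-w}$. Defining $F(w)=f(\varphi^{-1}(w))$ produces a bounded analytic function on $\D$ whose zeros are exactly the pairwise distinct points $w_n=\varphi(z_n)=\frac{z_n-1}{z_n+1}$. The theorem then reduces to showing that, under hypotheses i) and ii), the sequence $\{w_n\}$ violates the Blaschke condition $\sum_n(1-|w_n|)<\infty$; since a bounded analytic function on $\D$ that is not identically zero must have its zero set obeying the Blaschke condition, this forces $F\equiv 0$ and hence $f\equiv 0$.

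The heart of the argument is therefore a computation relating $1-|w_n|$ to $\Re(1/z_n)$. A direct calculation gives
\[
1-|w_n|^2 = 1-\left|\frac{z_n-1}{z_n+1}\right|^2 = \frac{|z_n+1|^2-|z_n-1|^2}{|z_n+1|^2} = \frac{4\,\Re(z_n)}{|z_n+1|^2},
\]
while $\Re(1/z_n)=\Re(z_n)/|z_n|^2$. Using $1-|w_n|\geq\frac12(1-|w_n|^2)$ I would reduce matters to comparing $|z_n+1|^2$ with $|z_n|^2$. This is exactly where hypothesis i) enters: writing $\delta=\inf_n|z_n|>0$, the bound $|z_n+1|\leq|z_n|+1\leq(1+\delta^{-1})|z_n|$ yields $|z_n+1|^2\leq(1+\delta^{-1})^2|z_n|^2$, and therefore
\[
1-|w_n|\geq\frac{2\,\Re(z_n)}{|z_n+1|^2}\geq\frac{2}{(1+\delta^{-1})^2}\,\frac{\Re(z_n)}{|z_n|^2}=\frac{2}{(1+\delta^{-1})^2}\,\Re\!\left(\frac{1}{z_n}\right).
\]
Summing over $n$ and invoking hypothesis ii) gives $\sum_n(1-|w_n|)=\infty$, which is the desired failure of the Blaschke condition.

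I expect the main obstacle to be pinpointing the precise role of hypothesis i). The comparison above breaks down without it: if the $z_n$ were allowed to cluster at the boundary point $0$, then $\Re(1/z_n)$ could blow up while $1-|w_n|^2\approx 4\,\Re(z_n)$ stays small, so the two quantities would no longer be comparable and the reduction to the Blaschke condition would fail. Once the uniform lower bound $\delta>0$ is in hand, the estimate is robust for both small and large $|z_n|$ (for large $|z_n|$ one simply has $|z_n+1|^2\sim|z_n|^2$), so condition i) is what guarantees that a single constant works across the whole sequence. The only other point requiring care is bookkeeping with multiplicities: since the $w_n$ are distinct, they form a subset of the full zero set of $F$, so divergence of $\sum_n(1-|w_n|)$ \emph{a fortiori} forces the full Blaschke sum to diverge, which is all that is needed to conclude $F\equiv 0$.
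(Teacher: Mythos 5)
Your proof is correct: the Cayley-transform computation $1-|w_n|^2=4\,\Re(z_n)/|z_n+1|^2$, the comparison with $\Re(1/z_n)$ via the lower bound $\inf_n|z_n|=\delta>0$, and the appeal to the Blaschke condition for bounded analytic functions on $\D$ all check out. The paper itself offers no proof of this statement --- it is quoted as a classical theorem with a citation to Remmert --- and your argument is precisely the standard one underlying that classical result, so there is nothing to compare beyond noting that you have filled in the omitted details correctly.
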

\begin{remark}
\label{blk2} Now one can apply this theorem to prove that if $f\in
L^1([0,1],rdr)$ and if there exist $n_0, p\in\mathbb{N}$ such that
$$\widehat{f}(pk+n_0)=0\textrm{ for all } k\in\mathbb{N},$$
then $\widehat{f}(z)=0$ for all $z\in\{z:\Re z>2\}$ and so $f=0$.
\end{remark}

We shall often use the Lemma 2.1 in \cite[p~1767]{dz} which can be stated as follows.
\begin{lem}\label{lemdz}
Let $p \in \Z$ and $\phi$ be a bounded radial function. For each $k \in \N$,
\begin{align*}
		T_{e^{ip\theta}\phi}(z^k) = & \left\{ \begin{array}{ll}
	(2k+2p+2)\widehat{\phi}(2k+p+2)z^{k+p} & \text{ if }  k \geq -p\\
	(-2k-2p+2)\widehat{\phi}(-p+2)\overline{z}^{-k-p} & \text{ if } k < -p, 
\end{array}	\right.\\
		T_{e^{ip\theta}\phi}(\overline{z}^k) = & \left\{ \begin{array}{ll}
	(2k-2p+2)\widehat{\phi}(2k-p+2)\overline{z}^{k-p} & \text{ if }  k \geq p\\
	(2p-2k+2)\widehat{\phi}(p+2)z^{p-k} & \text{ if } k < p. 
\end{array}	\right.
\end{align*}
\end{lem}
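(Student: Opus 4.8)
The plan is to compute $T_{e^{ip\theta}\phi}(z^k)=Q\bigl(e^{ip\theta}\phi(r)z^k\bigr)$ directly from the definition of the orthogonal projection $Q$ onto $L^2_h$, using the given orthonormal basis. First I would write $z^k=r^ke^{ik\theta}$, so that the function to be projected is $g(re^{i\theta})=\phi(r)\,r^k\,e^{i(k+p)\theta}$, and then expand
\[
Q(g)=\sum_{n\ge 0}\langle g,\sqrt{n+1}\,z^n\rangle\,\sqrt{n+1}\,z^n+\sum_{n\ge 1}\langle g,\sqrt{n+1}\,\overline{z}^n\rangle\,\sqrt{n+1}\,\overline{z}^n,
\]
the inner products being those of $L^2(\D,dA)$.

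The decisive point is that each inner product separates into an angular and a radial integral. The angular integral $\int_0^{2\pi}e^{i(k+p)\theta}e^{\mp in\theta}\,d\theta$ vanishes unless its exponent is zero, so at most one term survives in each sum. For the holomorphic sum the surviving index is $n=k+p$, which is admissible (i.e.\ $n\ge 0$) exactly when $k\ge -p$; for the antiholomorphic sum the surviving index is $n=-(k+p)$, admissible (i.e.\ $n\ge 1$) exactly when $k<-p$. Hence the sign of $k+p$ forces the dichotomy in the statement: the image lies in the holomorphic span when $k\ge -p$ and in the antiholomorphic span when $k<-p$.

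It then remains to evaluate the surviving integral and identify it with a value of the Mellin transform. When $k\ge -p$ one finds $\int_0^1\phi(r)\,r^{2k+p+1}\,dr=\widehat{\phi}(2k+p+2)$; accounting for the normalization of $dA$ and the two factors $\sqrt{k+p+1}$ gives the coefficient $2(k+p+1)=2k+2p+2$ of $z^{k+p}$. When $k<-p$, with $n=-(k+p)$, the integral is $\int_0^1\phi(r)\,r^{-p+1}\,dr=\widehat{\phi}(-p+2)$, producing the coefficient $2(-k-p+1)=-2k-2p+2$ of $\overline{z}^{-k-p}$. This yields both lines of the formula for $T_{e^{ip\theta}\phi}(z^k)$. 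The computation for $T_{e^{ip\theta}\phi}(\overline{z}^k)$ is entirely parallel: writing $\overline{z}^k=r^ke^{-ik\theta}$ produces angular frequency $p-k$, so the holomorphic index is $n=p-k$ (admissible when $k\le p$) and the antiholomorphic index is $n=k-p$ (admissible when $k>p$), and the same radial/Mellin identification gives the two stated cases.

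There is no real obstacle here; the content is careful bookkeeping. The two points that deserve attention are (i) the different admissibility ranges $n\ge 0$ for the $z^n$ basis and $n\ge 1$ for the $\overline{z}^n$ basis, which is precisely what turns one inequality into a strict one in each case split, and (ii) consistency at the boundary values $k=-p$ (for $z^k$) and $k=p$ (for $\overline{z}^k$), where the relevant monomial collapses to the constant function and the two candidate expressions must---and do---agree, e.g.\ both giving $2\widehat{\phi}(p+2)$ times the constant at $k=p$. Tracking the factors of $\sqrt{n+1}$ together with the normalization of $dA$ then produces the correct integer coefficients throughout.
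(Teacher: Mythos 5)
Your computation is correct and is exactly the standard argument: the paper does not prove this lemma itself but quotes it as Lemma~2.1 of the Dong--Zhou reference \cite{dz}, where it is established by the same direct expansion of $Q$ in the orthonormal basis $\{\sqrt{n+1}\,z^n\}\cup\{\sqrt{n+1}\,\overline{z}^n\}$, with the angular integral selecting the single surviving index and the radial integral producing the Mellin value. Your attention to the admissibility ranges ($n\ge 0$ versus $n\ge 1$) and to the agreement of the two formulas at the boundary cases $k=-p$ and $k=p$ is precisely the bookkeeping needed, so nothing is missing.
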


\section{Quasihomogeneous and radial Toeplitz operators}
In \cite{lsz}, the authors gave a necessary and sufficient conditions for the product of any two quasihomogeneous Toeplitz operators, defined on the Bergman space of the unit disk, to be a Toeplitz operator. Recently in \cite{dz}, Dong and Zhou investigated the same question for Toeplitz operators defined on $L^2_h$ but with symbol of the form $e^{ip\theta}r^m$, where $p\in\mathbb{Z}$ and $m$ positive integer. In the following theorem, we prove that the product in $L^2_h$ of two Toeplitz operators, one quasihomogeneous and the other radial,  is a Toeplitz operator only in the trivial case.

\begin{thm}
Let $p$ be a nonzero integer and $\phi$ be a bounded nonzero radial function. If there exists a radial symbol $\psi$ such that $T_{e^{ip\theta}\phi}T_{\psi}$ is a Toeplitz operator, then $\psi$ must be constant function.
\end{thm}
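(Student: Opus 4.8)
The plan is to test the presumed operator identity on the orthonormal basis, extract scalar relations between the Mellin transforms of $\phi$ and $\psi$, and then invoke the uniqueness principle of Remark~\ref{blk2}. First I would record the action of the two factors. Taking $p=0$ in Lemma~\ref{lemdz} shows that a radial Toeplitz operator is diagonal: writing $\lambda_k=(2k+2)\widehat{\psi}(2k+2)$, one has $T_\psi(z^k)=\lambda_k z^k$ and $T_\psi(\overline z^k)=\lambda_k\overline z^k$. Composing with $T_{e^{ip\theta}\phi}$ (I treat $p>0$, the case $p<0$ being entirely analogous) and applying Lemma~\ref{lemdz} again, the product $T_{e^{ip\theta}\phi}T_\psi$ sends $z^k$ to a multiple of $z^{k+p}$ and $\overline z^k$ to a multiple of $\overline z^{k-p}$ (or of $z^{p-k}$ when $k<p$). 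In other words, assigning $z^k$ the degree $k$ and $\overline z^k$ the degree $-k$, the product raises this harmonic degree by exactly $p$.

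Next I would argue that this forces the presumed symbol $g$ to be quasihomogeneous of degree $p$. Expanding $g$ in its angular Fourier series $g=\sum_m e^{im\theta}g_m$ gives $T_g=\sum_m T_{e^{im\theta}g_m}$, and the $m$-th summand raises the harmonic degree by $m$. Comparing the matrix entries of $T_g$ with those of $T_{e^{ip\theta}\phi}T_\psi$ between basis vectors whose degrees differ by $m\neq p$, all such entries must vanish; these entries are nonzero multiples of values of $\widehat{g_m}$ along an arithmetic progression, so Remark~\ref{blk2} yields $g_m=0$. Hence $g=e^{ip\theta}\chi$ with $\chi$ radial, and matching coefficients through Lemma~\ref{lemdz} gives, for $k\ge 0$ and $k\ge p$ respectively,
\begin{align*}
\widehat{\chi}(2k+p+2) &= \lambda_k\,\widehat{\phi}(2k+p+2),\\
\widehat{\chi}(2k-p+2) &= \lambda_k\,\widehat{\phi}(2k-p+2).
\end{align*}

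Both families evaluate $\widehat{\chi}$ on the same progression $\{p+2,p+4,\dots\}$, so equating them at the common point $p+2+2j$ (taking $k=j$ in the first relation and $k=j+p$ in the second) gives the key relation
\[
\bigl(\lambda_j-\lambda_{j+p}\bigr)\,\widehat{\phi}(p+2+2j)=0,\qquad j\ge 0.
\]
Equivalently, the analytic function $z\mapsto\bigl[z\widehat{\psi}(z)-(z+2p)\widehat{\psi}(z+2p)\bigr]\widehat{\phi}(z+p)$ vanishes on $\{2,4,6,\dots\}$. I would then show this function is a constant multiple of the Mellin transform of an $L^1$ function, so that Remark~\ref{blk2} forces it to vanish identically; since $\widehat{\phi}(z+p)=\widehat{r^p\phi}(z)\not\equiv0$ as $\phi\neq0$, analyticity then yields the functional equation $z\widehat{\psi}(z)=(z+2p)\widehat{\psi}(z+2p)$. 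This makes $\lambda_j=\lambda_{j+p}$ for every $j$; restricting $\widehat{\psi}$ to a single residue class modulo $p$ shows $\widehat{\psi-c}$ vanishes on an arithmetic progression of step $2p$ for the appropriate constant $c$, and a final appeal to Remark~\ref{blk2} gives $\psi\equiv c$.

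The main obstacle is the passage at the start of the last paragraph: the factor $z\widehat{\psi}(z)$ is not bounded on vertical lines, so the uniqueness theorem cannot be applied to the bracket by itself. I expect to resolve this by integration by parts, $z\widehat{\psi}(z)=\psi(1)-\widehat{r\psi'}(z)$, which rewrites the bracket as $\widehat{(r^{2p}-1)r\psi'}(z)$; its product with $\widehat{r^p\phi}(z)$ is then the Mellin transform of a Mellin convolution, hence of a genuine $L^1$ function, so Remark~\ref{blk2} does apply. Treating merely bounded (non-smooth) $\psi$ will require an approximation argument, and some care is also needed at points where $\widehat{\phi}$ happens to vanish on the progression — it is precisely the passage to an identity of analytic functions that fills in those gaps.
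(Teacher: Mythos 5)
Your argument follows the paper's strategy step for step: reduce the presumed Toeplitz product to the form $T_{e^{ip\theta}h}$ with $h$ radial, match coefficients on the basis to obtain $\bigl(\lambda_j-\lambda_{j+p}\bigr)\widehat{\phi}(2j+p+2)=0$ for all $j\ge 0$ (exactly the paper's key relation, obtained there by combining the $z^k$ family with the reindexed $\overline{z}^k$, $k\ge p$, family), promote this to an identity of analytic functions, and conclude from the rigidity of the functional equation $z\widehat{\psi}(z)=(z+2p)\widehat{\psi}(z+2p)$. The differences in packaging are harmless: you re-derive the reduction to a symbol of the form $e^{ip\theta}h$, which the paper simply cites from Dong--Zhou, and you finish with periodicity of $z\widehat{\psi}(z)$ plus Remark~\ref{blk2}, where the paper invokes \cite[Lemma 6]{l}; these are equivalent.

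The one step that does not go through as written is your boundedness repair. You correctly observe that Theorem~\ref{blk1} cannot be applied to $\bigl[z\widehat{\psi}(z)-(z+2p)\widehat{\psi}(z+2p)\bigr]\widehat{\phi}(z+p)$ because the factor $z\widehat{\psi}(z)$ is unbounded on vertical lines, but the proposed fix via $z\widehat{\psi}(z)=\psi(1)-\widehat{r\psi'}(z)$ presupposes that $\psi$ is differentiable with $r\psi'\in L^1$ and a boundary value at $r=1$, which a general bounded radial symbol need not have; and the approximation argument you defer is not routine, since the hypothesis is an exact identity for the given $\psi$ and is not preserved under smoothing. The repair is much simpler and is what the paper does: normalize by $1/(z(z+2p))$ before invoking uniqueness. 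The function
\[
\frac{\widehat{\psi}(z)}{z+2p}\,\widehat{\phi}(z+p)-\frac{\widehat{\psi}(z+2p)}{z}\,\widehat{\phi}(z+p)
\]
is built from Mellin transforms of bounded functions divided by nonvanishing linear factors, hence is bounded and analytic on $\Re z\ge\varepsilon$; it vanishes at $z=2j+2$ for every $j\ge 0$, so Theorem~\ref{blk1} forces it to vanish identically, and since $1/(z(z+2p))$ has no zeros this yields the same identity $\bigl[z\widehat{\psi}(z)-(z+2p)\widehat{\psi}(z+2p)\bigr]\widehat{\phi}(z+p)\equiv 0$, from which your division by $\widehat{\phi}(z+p)\not\equiv 0$ proceeds as you describe. (In the paper this normalization appears as the insertion of the factors $\widehat{\Un}(z)=1/z$.) With that substitution your proof is complete and coincides in substance with the paper's.
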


\begin{proof}
First consider $p > 0$.
By \cite[Theorem 1.2, p~1767]{dz}, if the product $T_{e^{ip\theta}\phi}T_{\psi}$  is a Toeplitz operator, then it must be of the form 
\begin{equation}\label{h}
T_{e^{ip\theta}\phi}T_{\psi}=T_{e^{ip\theta}h},
\end{equation}
 where $h$ is a radial function. Now, using Lemma \ref{lemdz}, we show that
\begin{equation*}
T_{e^{ip\theta}\phi}T_{\psi}(z^k)=2(k+1)\widehat{\psi}(2k+2)2(k+p+1)\widehat{\phi}(2k+p+2) z^{k+p}\  \textrm{ if } k\geq 0,
\end{equation*}
and
\begin{displaymath}
T_{e^{ip\theta}\phi}T_{\psi}(\bar{z}^k)=\left\{\begin{array}{ll}
2(k+1)\widehat{\psi}(2k+2)2(k-p+1)\widehat{\phi}(2k-p+2)\bar{z}^{k-p}&\textrm{ if } k\geq p\\
2(k+1)\widehat{\psi}(2k+2)2(p-k+1)\widehat{\phi}(p+2) z^{p-k}&\textrm{ if } 0\leq k<p.
\end{array}\right.
\end{displaymath}
Similarly we have
\begin{equation*}
T_{e^{ip\theta}h}(z^k)=2(k+p+1)\widehat(2k+p+2)z^{k+p}\ \textrm{ if }k\geq 0,
\end{equation*}
and
\begin{displaymath}
T_{e^{ip\theta}h}(\bar{z}^k)=\left\{\begin{array}{ll}
2(k-p+1)\widehat{h}(2k-p+2)\bar{z}^{k-p}&\textrm{ if } k\geq p\\
2(p-k+1)\widehat{h}(p+2)z^{p-k}&\textrm{ if } 0\leq k< p.
\end{array}\right.
\end{displaymath}
Therefore, Equation (\ref{h}) together with the above equalities imply
\begin{eqnarray}
2(k+1)\widehat{\psi}(2k+2)\widehat{\phi}(2k+p+2)&=&\widehat{h}(2k+p+2),  \textrm{ if }k\geq 0\\
2(k+1)\widehat{\psi}(2k+2)\widehat{\phi}(2k-p+2)&=&\widehat{h}(2k-p+2), \textrm{ if }k\geq p\\
2(k+1)\widehat{\psi}(2k+2)\widehat{\phi}(p+2)&=&\widehat{h}(p+2),  \textrm{ if }0\leq k< p
\end{eqnarray}
Replacing $k$ by $k+p$ in (3) implies
\begin{equation}
2(k+p+1)\widehat{\psi}(2k+2p+2)\widehat{\phi}(2k+p+2)=\widehat{h}(2k+p+2), \textrm{ for all }k\geq 0.
\end{equation}
Combining (2) and (5), we obtain
\begin{equation}
\left(2(k+1)\widehat{\psi}(2k+2)-2(k+p+1)\widehat{\psi}(2k+2p+2)\right)\widehat{\phi}(2k+p+2)=0\textrm { for all } k\geq 0.
\end{equation}
Let $Z=\{k\in\mathbb{N}: \widehat{\phi}(2k+p+2)=0\}$. Since by hypothesis $\phi$ is not identically zero, Theorem \ref{blk1} implies that $\displaystyle{\sum_{k\in Z^c}\frac{1}{2k+p+2}=\infty}$, where $Z^c$ is the complementary of $Z$ in $\mathbb{N}$. Moreover, for all $k\in\Z^c$, we have
$$2(k+1)\widehat{\psi}(2k+2)=2(k+p+1)\widehat{\psi}(2k+2p+2).$$
Let  $\Un$ denotes the constant function with value one. Since $\widehat{\Un}(z)=\frac{1}{z}$, the above equation is equivalent to
\begin{equation}\label{c7}
\widehat{\Un}(2k+p+2)\widehat{\psi}(2k+2)=\widehat{\Un}(2k+2)\widehat{\psi}(2k+2p+2),\textrm{ for all }k\in Z^c.
\end{equation}
Hence, Remark \ref{blk2} together with \cite[Lemma 6, p~1468]{l}  imply that $\psi=c\Un$, for some constant $c$.

A similar argument shows that the result remains true for $p<0$, which completes the proof.
\end{proof}

In \cite{lz}, the authors showed that a quasihomogeneous Toeplitz operator, defined  on the analytic  Bergman space, commutes with a radial one, only when the radial symbol of the latter is constant. The same result remains true in $L^2_h$.
\begin{thm}\label{th3}
Let $p$ be a nonzero integer and $\phi$, $\psi$ be two bounded radial functions. If $T_{e^{ip\theta}\phi}$ commutes with $T_\psi$, then $\phi$ is zero or $\psi$ is  constant.
\end{thm}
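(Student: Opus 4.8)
The plan is to mimic the computation in the previous theorem, but now comparing $T_{e^{ip\theta}\phi}T_\psi$ with $T_\psi T_{e^{ip\theta}\phi}$ directly, rather than assuming the product is a Toeplitz operator. First I would treat the case $p>0$ and apply Lemma \ref{lemdz} to compute the action of both compositions on the basis vector $z^k$ for $k\geq 0$. Since $T_\psi$ is radial, it acts diagonally, sending $z^k$ to $2(k+1)\widehat\psi(2k+2)z^k$; then $T_{e^{ip\theta}\phi}$ shifts the degree up by $p$. Carrying this out, the equation $T_{e^{ip\theta}\phi}T_\psi(z^k)=T_\psi T_{e^{ip\theta}\phi}(z^k)$ becomes a scalar identity relating $\widehat\psi(2k+2)$ and $\widehat\psi(2k+2p+2)$, weighted by $\widehat\phi(2k+p+2)$.

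Concretely, I expect to arrive at an equation of the form
\begin{equation*}
\bigl(2(k+1)\widehat\psi(2k+2)-2(k+p+1)\widehat\psi(2k+2p+2)\bigr)\widehat\phi(2k+p+2)=0\quad\text{for all }k\geq 0,
\end{equation*}
which is exactly equation (6) from the proof of the first theorem. From here the argument is identical: setting $Z=\{k\in\mathbb N:\widehat\phi(2k+p+2)=0\}$ and assuming $\phi\not\equiv 0$, Theorem \ref{blk1} forces $\sum_{k\in Z^c}\frac{1}{2k+p+2}=\infty$, so on the large set $Z^c$ we get $\widehat{\Un}(2k+p+2)\widehat\psi(2k+2)=\widehat{\Un}(2k+2)\widehat\psi(2k+2p+2)$. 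Then Remark \ref{blk2} together with \cite[Lemma 6, p~1468]{l} yield $\psi=c\,\Un$, i.e.\ $\psi$ is constant. Finally a symmetric computation disposes of the case $p<0$.

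The main obstacle I anticipate is bookkeeping rather than conceptual: because $p\neq 0$, the two compositions do not merely act on $z^k$ but also interact nontrivially with the antiholomorphic basis vectors $\overline z^k$, and Lemma \ref{lemdz} splits into cases according to whether the exponent exceeds $\pm p$. I would need to check that commutation on the antiholomorphic side (and in the low-degree regime $0\leq k<p$) produces no additional constraints beyond equation (6)—or, more efficiently, observe that testing commutation on $\{z^k\}_{k\geq 0}$ alone already suffices to force $\psi$ constant, after which one verifies a posteriori that a constant $\psi$ does commute with any $T_{e^{ip\theta}\phi}$, closing the argument. The delicate point is ensuring the holomorphic computation genuinely reduces to the already-established equation (6) so that the cited uniqueness results apply verbatim.
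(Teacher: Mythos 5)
Your proposal is correct and matches the paper's proof essentially verbatim: the paper also tests commutativity only on the holomorphic basis vectors $z^k$, arrives at the same identity $\bigl(2(k+1)\widehat\psi(2k+2)-2(k+p+1)\widehat\psi(2k+2p+2)\bigr)\widehat\phi(2k+p+2)=0$, splits on the zero set $Z$ of $\widehat\phi(2k+p+2)$ via Theorem \ref{blk1}, and concludes with Remark \ref{blk2} and \cite[Lemma 6]{l} (handling $p<0$ by passing to adjoints). Your worry about extra constraints from the antiholomorphic side is moot, since one is proving a necessary condition and the holomorphic equations alone already force the conclusion.
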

\begin{proof}
Let $p \geq 0$.
If $T_{e^{ip\theta}\phi}T_\psi = T_\psi T_{e^{ip\theta}\phi}$, then using Lemma \ref{lemdz}, we obtain that for all $k \geq 0$
$$2(k+p+1) \widehat{\phi}(2k+p+2) \widehat{\psi}(2k+2p+2) = 2(k+1) \widehat{\phi}(2k+p+2) \widehat{\psi}(2k+2).$$
Let $Z=\{ k\geq 0: \widehat{\phi}(2k+p+2)=0\}$. If $\displaystyle{\sum_{k\in Z}\frac{1}{2k+p+2}=\infty}$, then Theorem \ref{blk1}  implies that $\phi$ is zero function.  Otherwise $\displaystyle{\sum_{k\in Z^c}\frac{1}{2k+p+2}=\infty}$, where  $Z^c$ is the complementary of $Z$  in $\mathbb{N}$. Now, for all $k\in Z^c$ we have
$$(2k+2p+2) \widehat{\psi}(2k+2p+2) = (2k+2) \widehat{\psi}(2k+2),$$
which is equivalent to
\begin{equation}\label{c8}
\widehat{\Un}(2k+2)\widehat{\psi}(2k+2p+2)=\widehat{\Un}(2k+p+2)\widehat{\psi}(2k+2),\textrm{ for all }k\in Z^c.
\end{equation}
Consequently, Remark \ref{blk2} and \cite[Lemma 6, p~1468]{l} imply that $\psi = c\Un$ for some constant $c$.

If $p < 0$, the same result is obtained by considering the adjoint operators. 
\end{proof}
Let $f$ be a  bounded function with polar decomposition 
	\begin{equation}\label{polar}
	{f(re^{i\theta})=\sum_{n\in\mathbb{Z}}e^{in\theta}f_n(r)}.
	\end{equation} 
Then we have the following corollary.
\begin{cor}\label{cor1}
If $\psi$ is a non-constant bounded radial function such that  $T_{\psi}$  commutes with $T_f$ , then $f$ must be radial function.
\end{cor}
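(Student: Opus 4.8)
The plan is to exploit the fact that a radial Toeplitz operator acts diagonally on the natural basis of $L^2_h$, while each quasihomogeneous piece of $f$ shifts this grading by a fixed amount; this lets me peel off the individual frequencies and reduce everything to Theorem \ref{th3}. First I would index the orthogonal basis by $\Z$, writing $e_m=z^m$ for $m\ge 0$ and $e_m=\overline z^{\,-m}$ for $m<0$. Setting $p=0$ in Lemma \ref{lemdz} shows that $T_\psi e_m=\lambda_m e_m$ with $\lambda_m=2(|m|+1)\widehat\psi(2|m|+2)$, so $T_\psi$ is diagonal and its eigenvalue depends only on $|m|$. Using the polar decomposition (\ref{polar}) I would write $T_f=\sum_{n\in\Z}T_{e^{in\theta}f_n}$, where each $f_n$ is bounded and radial, and record from Lemma \ref{lemdz} the key grading property: for every $m$ and $n$ there is a scalar $c_{m,n}$ with $T_{e^{in\theta}f_n}e_m=c_{m,n}\,e_{m+n}$, i.e. a quasihomogeneous symbol of degree $n$ raises the index by exactly $n$.

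Next I would apply the commutator $[T_\psi,T_f]=0$ to a basis vector. Since $T_f e_m=\sum_n c_{m,n}e_{m+n}$ and $T_\psi$ is diagonal,
\[
0=[T_\psi,T_f]e_m=\sum_{n\in\Z}c_{m,n}\bigl(\lambda_{m+n}-\lambda_m\bigr)e_{m+n}.
\]
Because the vectors $\{e_{m+n}\}_n$ are pairwise orthogonal, every coefficient must vanish, that is $c_{m,n}(\lambda_{m+n}-\lambda_m)=0$ for all $m,n$. But this is precisely the statement that $[T_\psi,T_{e^{in\theta}f_n}]e_m=c_{m,n}(\lambda_{m+n}-\lambda_m)e_{m+n}=0$; since $m$ is arbitrary, I conclude that $T_\psi$ commutes with each $T_{e^{in\theta}f_n}$ separately.

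Finally, for every nonzero $n$ I would invoke Theorem \ref{th3}: since $T_{e^{in\theta}f_n}$ commutes with $T_\psi$ and $\psi$ is non-constant, the theorem forces $f_n\equiv 0$. Hence the only surviving term in (\ref{polar}) is $f_0(r)$, so $f$ is radial. I expect the main point requiring care to be the justification of the two structural facts underlying the argument: that $T_f$ genuinely decomposes as $\sum_n T_{e^{in\theta}f_n}$ when applied to each $e_m$ (which follows from expanding $Q(f e_m)$ frequency by frequency), and the clean degree shift $T_{e^{in\theta}f_n}e_m=c_{m,n}e_{m+n}$, which must be read off from all four cases of Lemma \ref{lemdz}. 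Once these are in place the orthogonality argument and the appeal to Theorem \ref{th3} are immediate. Equivalently, one could obtain the separation of frequencies by conjugating the relation $[T_\psi,T_f]=0$ with the rotation unitaries $U_s\colon g(z)\mapsto g(e^{is}z)$, under which $T_\psi$ is invariant while $T_{e^{in\theta}f_n}$ acquires the factor $e^{ins}$, and then integrating against $e^{-ins}\,ds$.
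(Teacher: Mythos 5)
Your proof is correct and follows essentially the same route as the paper: the paper packages your frequency-separation step as Lemma \ref{lem1} (proved there by computing the matrix entries $\langle T_fT_\psi\zeta_k,\zeta_j\rangle$ and $\langle T_\psi T_f\zeta_k,\zeta_j\rangle$, which is your diagonal-plus-grading argument in coordinates) and then invokes Theorem \ref{th3} exactly as you do. Your inline orthogonality argument is a slightly streamlined rederivation of the forward direction of that lemma, not a different method.
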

 To prove Corollary \ref{cor1}, we need the following lemma.
\begin{lem}\label{lem1}
Let $f$ be  bounded function with polar decomposition as in \eqref{polar} and $\psi$ be a bounded radial function. Then the product $T_fT_\psi$ is commutative if and only if $T_{e^{in\theta}f_n}$ commutes with $T_\psi$, for each $n \in \Z$.
\end{lem}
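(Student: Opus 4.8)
The plan is to exploit the natural $\Z$-grading of $L^2_h$ carried by the orthonormal basis. Index the basis vectors as $e_m$, $m\in\Z$, where $e_m=z^m$ for $m\ge 0$ and $e_m=\overline{z}^{-m}$ for $m<0$; I shall say $e_m$ has \emph{degree} $m$, and set $H_m=\mathbb{C}\,e_m$, so that $L^2_h=\overline{\bigoplus_{m\in\Z}H_m}$. The first step is to read off from Lemma \ref{lemdz} how the two types of factors act on this grading. Each quasihomogeneous factor $T_{e^{in\theta}f_n}$ sends $H_m$ into $H_{m+n}$ for every $m$ (it raises degree by exactly $n$), while the radial operator $T_\psi$, being the case $p=0$ of Lemma \ref{lemdz}, sends each $H_m$ into itself. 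Note also that each Fourier coefficient satisfies $|f_n(r)|\le\|f\|_\infty$, so $e^{in\theta}f_n$ is a bounded symbol and each $T_{e^{in\theta}f_n}$ is a bounded operator. Consequently every commutator $C_n:=T_{e^{in\theta}f_n}T_\psi-T_\psi T_{e^{in\theta}f_n}$ again maps $H_m$ into $H_{m+n}$, i.e.\ it is homogeneous of degree $n$.

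The second step is to justify $T_f=\sum_{n\in\Z}T_{e^{in\theta}f_n}$ in the strong sense on basis vectors. Applying $T_f$ to $e_m$ and using that $f$ is bounded and $e_m\in L^\infty(\D)$, the product $fe_m$ lies in $L^2(\D,dA)$ and its angular Fourier expansion $fe_m=\sum_{n}e^{in\theta}f_n e_m$ converges in $L^2(\D,dA)$. Since $Q$ is continuous it commutes with this sum, giving
\[
T_f e_m=Q(fe_m)=\sum_{n\in\Z}Q\bigl(e^{in\theta}f_n e_m\bigr)=\sum_{n\in\Z}T_{e^{in\theta}f_n}e_m,
\]
the series converging in $L^2_h$. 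Therefore $\bigl[T_f,T_\psi\bigr]e_m=\sum_{n\in\Z}C_n e_m$ for every $m$.

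The third step is the separation argument. For fixed $m$ the vectors $C_n e_m$ lie in the pairwise orthogonal lines $H_{m+n}$, $n\in\Z$, so the vanishing of the sum $\sum_{n}C_n e_m=0$ forces $C_n e_m=0$ for every $n$. Hence if $T_f$ commutes with $T_\psi$, then $\bigl[T_f,T_\psi\bigr]e_m=0$ for all $m$, which yields $C_n e_m=0$ for all $m$ and all $n$; this is exactly $C_n=0$, that is, $T_{e^{in\theta}f_n}$ commutes with $T_\psi$ for each $n\in\Z$. The converse is immediate: if every $C_n$ vanishes then $\bigl[T_f,T_\psi\bigr]e_m=\sum_n C_n e_m=0$ on the whole basis, so $T_fT_\psi=T_\psi T_f$.

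I expect the only genuinely analytic obstacle to be the interchange in the second step, namely justifying $Q\bigl(\sum_n e^{in\theta}f_n e_m\bigr)=\sum_n Q\bigl(e^{in\theta}f_n e_m\bigr)$ together with the $L^2$-convergence of the resulting series of operator outputs; once this is secured, everything else reduces to the bookkeeping of the grading, for which Lemma \ref{lemdz} does all the work.
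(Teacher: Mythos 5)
Your proof is correct and follows essentially the same route as the paper's: both arguments rest on the fact that $T_{e^{in\theta}f_n}$ shifts the degree of a basis vector by exactly $n$ while $T_\psi$ is diagonal, so that the commutator identity decouples over $n$ by orthogonality of the outputs in distinct degrees. Your grading formulation is simply a cleaner packaging of the paper's explicit matrix-entry computation $\langle T_fT_\psi\zeta_k,\zeta_j\rangle=2(2k+2)\widehat{\psi}(2k+2)C_{j,k}$ versus $\langle T_\psi T_f\zeta_k,\zeta_j\rangle=2(2j+2)\widehat{\psi}(2j+2)C_{j,k}$, and the interchange of $Q$ with the angular Fourier series that you flag is justified exactly as you describe.
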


\begin{proof}
Since $f$ is  bounded function, it is easy to show that the functions $f_n$ are bounded for all $n \in \Z$. For each $k > 0$ we have :
\begin{align*}
T_fT_\psi(z^k) 
& = (2k+2)\widehat{\psi}(2k+2)\sum_{n \in \Z}T_{e^{in\theta}f_n}(z^k)\\
& = (2k+2)\widehat{\psi}(2k+2)\Big(\sum_{n < -k}T_{e^{in\theta}f_n}(z^k) + \sum_{n \geq -k }T_{e^{in\theta}f_n}(z^k)\Big)\\
& = (2k+2)\widehat{\psi}(2k+2) \Big(\sum_{n < -k}2(-k-n+1)\widehat{f_n}(-n+2)\overline{z}^{-k-n} \\
&\qquad + \sum_{n \geq -k}2(k+n+1)\widehat{f_n}(2k+n+2) z^{k+n}\Big).
\end{align*}
Straight calculations imply that for  all couples of positive integers $(j,k)$ :
$$
\langle T_fT_\psi z^k , z^j \rangle = 2(2k+2)\widehat{\psi}(2k+2)\widehat{f}_{j-k}(j+k+2),$$
  and
$$\langle T_fT_\psi z^k , \overline{z}^j \rangle  = 2(2k+2)\widehat{\psi}(2k+2)\widehat{f}_{-j-k}(j+k+2).$$
Similar results are obtained when we apply $T_fT_\psi$ to $\overline{z}^k$, for all  $k > 0$. 

Henceforth, if $\displaystyle \zeta_k(z) = \left\{
\begin{array}{cl}
z^k & \text{ for } k \geq 0 \\ \overline{z}^{|k|} & \text{ for } k < 0
\end{array} \right.
$,
then 
\begin{equation*}
	\langle T_fT_\psi \zeta_k , \zeta_j \rangle = 2(2k+2)\widehat{\psi}(2k+2)C_{j,k} \text{ for all } j, k \in \Z,
\end{equation*}
where $C_{j,k}$ is one of the Mellin coefficients $\widehat{f}_{j-k}(j+k+2)$, $\widehat{f}_{-j-k}(j+k+2)$, $\widehat{f}_{k-j}(j+k+2)$ or $\widehat{f}_{k+j}(j+k+2)$.

Redoing  the same process, we have : 
\begin{equation*}
\langle T_\psi T_f \zeta_k , \zeta_j \rangle  = 2(2j+2)\widehat{\psi}(2j+2)C_{j,k} \text{ for all } j, k \in \Z.
\end{equation*}

Now assume $T_fT_\psi = T_\psi T_f$. For each $n \in \Z$ and for all $j, k > 0$ : 
\begin{align*}
\langle T_{e^{in\theta}f_n}T_\psi z^k , z^j \rangle  = & \left\{ \begin{array}{ll}
	0 & \text{ if } j \ne k+n\\
	2(2k+2)\widehat{\psi}(2k+2)\widehat{f}_{j-k}(j+k+2) & \text{ if }j = k+n 
\end{array}	\right.\\
= &  \left\{ \begin{array}{ll}
	0 & \text{ if } j \ne k+n\\
	\langle T_fT_\psi z^k , z^j \rangle & \text{ if }j = k+n
\end{array}	\right.\\
= &  \left\{ \begin{array}{ll}
	0 & \text{ if } j \ne k+n\\
	\langle T_\psi T_f z^k , z^j \rangle & \text{ if }j = k+n
\end{array}	\right.\\
= &  \left\{ \begin{array}{ll}
	0 & \text{ if } j \ne k+n\\
	2(2j+2)\widehat{\psi}(2j+2)\widehat{f}_{j-k}(j+k+2) & \text{ if }j = k+n
	\end{array}	\right.\\
= & \langle T_\psi T_{e^{in\theta}f_n}z^k , z^j \rangle.
\end{align*}
Again  similar calculations  show that for all $j, k \in \Z$,
	\[\langle T_{e^{in\theta}f_n}T_\psi \zeta_k , \zeta_j \rangle  = \langle T_\psi T_{e^{in\theta}f_n}\zeta_k , \zeta_j \rangle,
\]
so that for each $n \in \Z$, $T_{e^{in\theta}f_n}T_\psi = T_\psi T_{e^{in\theta}f_n}$.

Conversely, suppose that $T_{e^{in\theta}f_n}$ commutes with $T_\psi$. Then for all $j, k \in \Z$, there exists some $n \in \Z$ such that 
\begin{equation}
\langle T_f T_\psi \zeta_k , \zeta_j \rangle 
 =  \langle T_{e^{in\theta}f_n}T_\psi \zeta_k , \zeta_j \rangle  
 = \langle T_\psi T_{e^{in\theta}f_n}\zeta_k , \zeta_j \rangle 
 = \langle T_\psi T_f \zeta_k , \zeta_j \rangle. 
\end{equation}
Hence, $T_f T_\psi = T_\psi T_f$ and this completes the proof.
\end{proof} 

\textit{Proof of Corollary \ref{cor1}.}
Since $T_f$ commutes with $T_\psi$, Lemma \ref{lem1} implies $T_{e^{in\theta}f_n}$ commutes with $T_\psi$, for each $n\in\mathbb{Z}$.  Therefore, using Theorem \ref{th3}, we conclude that $f_n=0$ for all $n\neq 0$, and hence $f=f_0$. 
\qed

\section{Quasihomogeneous Toeplitz operators of positive degree}
In this section, we shall study the conditions under which two quasihomogeneous Toeplitz operators of positive degree commute. Direct calculations using Lemma \ref{lemdz} give the following equations.
\begin{lem}\label{lm1}  
Let $p$, $s$ be positive integers and $\phi$, $\psi$ be bounded radial functions. If $T_{e^{ip\theta}\phi}T_{e^{is\theta}\psi} = T_{e^{is\theta}\psi}T_{e^{ip\theta}\phi}$, then the following equations hold:
\begin{itemize}
\item[$\ast$] For all $k\geq 0$
\begin{equation}\label{eq1}
(k+p+1)\widehat{\phi}(2k+p+2)\widehat{\psi}(2k+2p+s+2) = (k+s+1)\widehat{\phi}(2k+2s+p+2)\widehat{\psi}(2k+s+2).
\end{equation}
\item[$\ast$] For all $ k \geq p+s$
\begin{equation}\label{eq2} 
(k-p+1)\widehat{\phi}(2k-p+2)\widehat{\psi}(2k-2p-s+2) = (k-s+1)\widehat{\phi}(2k-2s-p+2)\widehat{\psi}(2k-s+2).
\end{equation}
\item[$\ast$]  For all $\max(p,s)\leq k \leq p+s$
\begin{equation} \label{eq3}
(k-p+1)\widehat{\phi}(2k-p+2)\widehat{\psi}(s+2) = (k-s+1)\widehat{\phi}(p+2)\widehat{\psi}(2k-s+2).
\end{equation} 
\item[$\ast$] For all $0 \leq k \leq \min(p,s)$ 
\begin{equation}\label{eq6}	
 (p-k+1)\widehat{\phi}(p+2)\widehat{\psi}(2p-2k+s+2) = (s-k+1)\widehat{\phi}(2s-2k+p+2)\widehat{\psi}(s+2).
\end{equation}
\end{itemize}
Moreover,
\begin{itemize}
\item[$\ast$] If  $p \leq s$, then for all $p \leq k \leq s$
\begin{equation}\label{eq4}
(k-p+1)\widehat{\phi}(2k-p+2)\widehat{\psi}(s+2) =(s-k+1)\widehat{\phi}(2s-2k+p+2)\widehat{\psi}(s+2).
\end{equation}
\end{itemize}
\end{lem}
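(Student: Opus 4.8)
The plan is to evaluate both compositions $T_{e^{ip\theta}\phi}T_{e^{is\theta}\psi}$ and $T_{e^{is\theta}\psi}T_{e^{ip\theta}\phi}$ on each basis vector $z^k$ and $\overline{z}^k$ with $k\geq 0$, by applying Lemma \ref{lemdz} twice, and then to read off the relations forced by the commutation hypothesis. Since $p,s>0$, acting on $z^k$ never triggers a case split: the inner operator sends $z^k$ to a nonnegative power of $z$, and the outer one does the same. Both orderings thus produce a scalar multiple of $z^{k+p+s}$, and after cancelling the common factor $2(k+p+s+1)$ and dividing by $2$, the equality of coefficients is exactly \eqref{eq1}. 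This disposes of the analytic basis vectors with no bookkeeping.

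The substance lies in applying the two orderings to $\overline{z}^k$, since they split the index $k$ at different thresholds. Reading Lemma \ref{lemdz}, the ordering $T_{e^{ip\theta}\phi}T_{e^{is\theta}\psi}$ branches according to whether $k\geq s$ and then whether the resulting conjugate power is $\geq p$, so its thresholds are $s$ and $p+s$; the ordering $T_{e^{is\theta}\psi}T_{e^{ip\theta}\phi}$ branches at $p$ and $p+s$. In every case one checks that for $k\geq p+s$ both images are multiples of $\overline{z}^{k-p-s}$, while for $k<p+s$ both are multiples of $z^{p+s-k}$, so the target monomials always match and commutation reduces to an equality of scalars. I would then partition $\{k\geq 0\}$ into the ranges $k\geq p+s$, then $\max(p,s)\leq k<p+s$, then $\min(p,s)\leq k<\max(p,s)$, and finally $0\leq k<\min(p,s)$, recording in each range which branch of Lemma \ref{lemdz} each ordering uses. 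On the first range both orderings end in the conjugate branch and yield \eqref{eq2}; on the second both end in the crossing branch (conjugate then analytic) and yield \eqref{eq3}; on the last both stay analytic and yield \eqref{eq6}. The factor cancelled is $2(k-p-s+1)$ when $k>p+s$ and $2(p+s-k+1)$ when $k<p+s$, each nonzero on its range.

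The intermediate range $\min(p,s)\leq k<\max(p,s)$ is where the two orderings genuinely disagree on which branch they use, and this is the source of the extra relation. When $p\leq s$ this range is $p\leq k<s$: the first ordering remains in its $k<s$ analytic branch while the second has already crossed into its $k\geq p$ conjugate branch, so equating coefficients (after cancelling $2(p+s-k+1)$ and dividing by $2$) gives \eqref{eq4}. When $p>s$ the same computation with $p$ and $s$ interchanged produces the mirror relation, which is why the statement records \eqref{eq4} only under the hypothesis $p\leq s$. To justify the closed endpoints in \eqref{eq3}, \eqref{eq4} and \eqref{eq6}, I would finish with a short substitution verifying that the relations agree where the ranges overlap — for instance that \eqref{eq2} and \eqref{eq3} coincide at $k=p+s$, and that \eqref{eq3} and \eqref{eq4} coincide at $k=s$.

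The obstacle I anticipate is organizational rather than conceptual: one must correctly intersect the two incompatible partitions of the index set and, in each resulting subinterval, identify the exact branch each ordering follows before matching coefficients. Once that table is in place, every equation follows by cancelling an explicit nonvanishing factor, so no genuine analytic difficulty arises.
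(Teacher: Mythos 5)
Your proposal is correct and is exactly the argument the paper intends: the paper offers no written proof beyond ``Direct calculations using Lemma \ref{lemdz} give the following equations,'' and your systematic application of Lemma \ref{lemdz} to $z^k$ and $\overline{z}^k$ in both orders, with the index set partitioned at the thresholds $\min(p,s)$, $\max(p,s)$, and $p+s$, is that direct calculation carried out in full. The identification of the range $\min(p,s)\leq k<\max(p,s)$ as the source of the extra relation \eqref{eq4}, and the endpoint consistency checks, are exactly the bookkeeping needed.
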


\begin{remark} It is important to make the distinction between the cases  "$p\leq s$"  and "$p>s$".
These two distinct conditions are crucial for the results of Theorem \ref{th6} and Theorem \ref{th7}.
\end{remark}

The following theorem shows the uniqueness of the commutant.
\begin{thm} 
Let $p$, $s$  be positive integers and let $\phi$ be a non-constant bounded radial function. If there exists a radial function $\psi$ such that $T_{e^{is\theta}\psi}$ commutes with $T_{e^{ip\theta}\phi}$, then $\psi$ is unique up to a multiplicative constant.
\end{thm}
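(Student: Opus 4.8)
The plan is to fix two bounded radial functions $\psi_1$ and $\psi_2$, both nonzero and both such that $T_{e^{is\theta}\psi_1}$ and $T_{e^{is\theta}\psi_2}$ commute with $T_{e^{ip\theta}\phi}$, and to produce a single scalar $c$ with $\psi_1=c\psi_2$. By Remark \ref{blk2} the Mellin transform of a radial function is determined by its values on the arithmetic sequence $\{2n+s+2\}_{n\ge 0}$, so it suffices to set $u_n=\widehat{\psi_1}(2n+s+2)$ and $v_n=\widehat{\psi_2}(2n+s+2)$ and to show that the two sequences $(u_n)$ and $(v_n)$ are proportional; then $\widehat{\psi_1-c\psi_2}$ vanishes along that sequence and hence $\psi_1=c\psi_2$.

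Both $\psi_1$ and $\psi_2$ satisfy the system of Lemma \ref{lm1}, and the engine of the argument is equation \eqref{eq1}, which I read as a two-term recurrence in steps of $p$ for $\widehat{\psi_j}(2n+s+2)$. Writing \eqref{eq1} for $\psi_1$ and for $\psi_2$ and cross-multiplying so as to cancel the common factors $(k+p+1)\widehat{\phi}(2k+p+2)$ and $(k+s+1)\widehat{\phi}(2k+2s+p+2)$, I expect to obtain, for every $k$ with $\widehat{\phi}(2k+p+2)\neq 0$, the Casorati-type identity
\begin{equation*}
u_{k+p}\,v_k = v_{k+p}\,u_k ,
\end{equation*}
which says that the vectors $(u_k,v_k)$ and $(u_{k+p},v_{k+p})$ are linearly dependent. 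Since $\phi$ is non-constant it is in particular nonzero, so by Theorem \ref{blk1} the set $Z^c=\{k:\widehat{\phi}(2k+p+2)\neq 0\}$ is large, in the sense that $\sum_{k\in Z^c}(2k+p+2)^{-1}=\infty$, exactly as in the earlier proofs. This makes the identity above available on a set of indices dense enough to propagate the linear dependence of $(u_n)$ and $(v_n)$ along each residue class modulo $p$, so that the ratio $u_n:v_n$ is constant on each such class.

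It then remains to tie the residue classes together and to fix one common constant. For this I would cross-multiply the boundary equations \eqref{eq6} and \eqref{eq3} in the same fashion, which, when $\widehat{\phi}(p+2)\neq 0$, forces each vector $(u_m,v_m)$ with $p-\min(p,s)\le m\le p$ to be linearly dependent on the anchor $(u_0,v_0)$. When $p\le s$ this range already contains a full set of representatives $0,1,\dots,p-1$ of the classes modulo $p$, so together with the within-class propagation of the previous step every $(u_n,v_n)$ becomes a multiple of $(u_0,v_0)$; choosing the anchor index so that the anchor vector is nonzero and letting $c$ be the corresponding ratio yields $u_n=c\,v_n$ for all $n$, and Remark \ref{blk2} closes the argument. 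The main obstacle is precisely the bookkeeping around the case distinction ``$p\le s$'' versus ``$p>s$'' stressed in the Remark: when $p>s$ the boundary relations only reach the classes $p-s,\dots,p$, so one must argue separately that the coefficients in the remaining classes are governed by the same constant, while simultaneously controlling the indices where $\widehat{\phi}$ vanishes (in particular the possibility $\widehat{\phi}(p+2)=0$, which requires relocating the anchor). Managing these degeneracies through Theorem \ref{blk1}, rather than the routine cross-multiplications, is where the real work lies.
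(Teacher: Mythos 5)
Your opening move is exactly the paper's: cross-multiplying the two instances of \eqref{eq1} (one for $\psi_1$, one for $\psi_2$) to cancel the $\widehat{\phi}$ factors yields, for $k\in Z^c$, the identity $\widehat{\psi_1}(2k+2p+s+2)\widehat{\psi_2}(2k+s+2)=\widehat{\psi_1}(2k+s+2)\widehat{\psi_2}(2k+2p+s+2)$, which is precisely Equation \eqref{eq7}. From there, however, the route you sketch has genuine gaps that you flag but do not close. You propose to propagate the linear dependence of $(u_k,v_k)$ and $(u_{k+p},v_{k+p})$ discretely along residue classes mod $p$ and then tie the classes together with the boundary relations \eqref{eq3} and \eqref{eq6}. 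Neither half works in general. First, the Casorati identity is only available for $k\in Z^c$, and divergence of $\sum_{k\in Z^c}(2k+p+2)^{-1}$ does not prevent $Z^c$ from missing entire residue classes (it could consist solely of multiples of $p$ and still have divergent reciprocal sum), nor does it guarantee unbroken chains $k,k+p,k+2p,\dots$ inside $Z^c$; moreover, even along an unbroken chain the dependence relation is vacuous at any index with $(u_k,v_k)=(0,0)$, so proportionality need not propagate past such an index. Second, the anchoring step via \eqref{eq6} requires $\widehat{\phi}(p+2)\neq 0$ and, as you yourself note, only reaches a full set of residues when $p\le s$; you give no argument for $p>s$ or for a vanishing anchor.

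The paper avoids all of this by going analytic immediately: the difference of the two sides of \eqref{eq7}, written as $\widehat{r^s\psi_1}(z+2p)\widehat{r^s\psi_2}(z)-\widehat{r^s\psi_1}(z)\widehat{r^s\psi_2}(z+2p)$, is bounded and analytic on a right half-plane and vanishes on $\{2k+2:k\in Z^c\}$, so Theorem \ref{blk1} forces it to vanish identically; this single stroke disposes of the gaps in $Z^c$, the common zeros, and every residue class at once. The conclusion $\psi_1=c\psi_2$ is then exactly the content of Lemma 6 of \cite{l}, which converts the resulting $2p$-periodicity of the ratio $\widehat{r^s\psi_1}/\widehat{r^s\psi_2}$ into proportionality using the analytic structure of the Mellin transform. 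That lemma is the substantive step missing from your outline, and it is not recoverable from the discrete relations alone, since those constrain only one residue class at a time. To repair your argument you would need to complexify \eqref{eq7} as above and then cite or reprove that lemma; the boundary equations \eqref{eq3} and \eqref{eq6} and the case split on $p\le s$ versus $p>s$ are then not needed at all.
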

\begin{proof}
Assume there exist two nonzero functions $\psi_1$ and $\psi_2$ such that both $T_{e^{is\theta}\psi_1}$ and $T_{e^{is\theta}\psi_2}$ commute  with $T_{e^{ip\theta}\phi}$. By Equation (\ref{eq1}),  we obtain for all $k \geq 0$ :
	\begin{align*}
(k+p+1)\widehat{\phi}(2k+p+2)\widehat{\psi_1}(2k+2p \;+&\;  s+2) = \\&(k+s+1)\widehat{\phi}(2k+2s+p+2)\widehat{\psi_1}(2k+s+2),
	\end{align*}
and 
	\begin{align*}
(k+p+1)\widehat{\phi}(2k+p+2)\widehat{\psi_2}(2k+2p \;+&\;  s+2) = \\&(k+s+1)\widehat{\phi}(2k+2s+p+2)\widehat{\psi_2}(2k+s+2).
	\end{align*}
Let $Z=\{ k\geq 0: \widehat{\phi}(2k+p+2)=0\}$. Using the same argument as in the proof of Theorem \ref{th3}, we have that $\displaystyle{\sum_{k\in Z^c}\frac{1}{2k+p+2}=\infty}$ and also that for all $k \in Z^c$ : 
\begin{equation}
\widehat{\psi_1}(2k+2p+s+2)\widehat{\psi_2}(2k+s+2) = \widehat{\psi_1}(2k+s+2)\widehat{\psi_2}(2k+2p+s+2).
\label{eq7}
\end{equation}
By Theorem \ref{blk2}, Equation (\ref{eq7}) is equivalent to 
	\[	\widehat{r^s\psi_1}(z+2p)\widehat{r^s\psi_2}(z) = \widehat{r^s\psi_1}(z)\widehat{r^s\psi_2}(z+2p) \textrm{ for }\Re z>0.
\]
Hence, \cite[Lemma 6, p~1468]{l}  implies $\psi_1 = c \psi_2$ for some constant $c$.
\end{proof}

If two quasihomogeneous symbols have the same degree, then the product of the associated Toeplitz operators is commutative only in the obvious case. 
\begin{thm}\label{th5}
Let $\phi$ and $\psi$ be bounded radial functions and $p$ be an integer. If $T_{e^{ip\theta}\phi}$ commutes with $T_{e^{ip\theta}\psi}$, then $\phi = c\psi$ where $c$ is a constant.
\end{thm}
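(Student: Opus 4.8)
The plan is to mimic the method used for the uniqueness theorem, but now to apply both products only to the analytic monomials $z^{k}$, which already carries all the needed information precisely because the two symbols share the same degree. I would first reduce to the case $p>0$: for $p<0$ one passes to adjoints, using $\big(T_{e^{ip\theta}\phi}\big)^{*}=T_{e^{-ip\theta}\overline{\phi}}$, so that a commuting pair of degree $p$ becomes a commuting pair of degree $-p>0$, and the conclusion $\overline{\phi}=c\,\overline{\psi}$ transfers back to $\phi=\overline{c}\,\psi$. (The case $p=0$ is excluded, since then both operators are radial and commute trivially for unrelated $\phi,\psi$.) One may also assume $\phi$ and $\psi$ are both nonzero, the degenerate cases being immediate.

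Next I would compute the two compositions on $z^{k}$ using Lemma \ref{lemdz} twice. Since $k\geq 0\geq -p$ at every step, applying $T_{e^{ip\theta}\psi}$ and then $T_{e^{ip\theta}\phi}$ gives
\[
T_{e^{ip\theta}\phi}T_{e^{ip\theta}\psi}(z^{k})=(2k+2p+2)(2k+4p+2)\,\widehat{\psi}(2k+p+2)\,\widehat{\phi}(2k+3p+2)\,z^{k+2p},
\]
and the reversed product is obtained by interchanging $\phi$ and $\psi$. Equating the two (this is exactly Equation \eqref{eq1} specialized to $s=p$) and cancelling the nonzero scalar $(2k+2p+2)(2k+4p+2)$, I obtain the clean identity
\[
\widehat{\phi}(2k+p+2)\,\widehat{\psi}(2k+3p+2)=\widehat{\phi}(2k+3p+2)\,\widehat{\psi}(2k+p+2)\qquad\text{for all }k\geq 0 .
\]
A pleasant feature, in contrast with the different-degree situation, is that no exceptional set $Z$ appears: the identity holds for every $k\geq 0$, so the argument is cleaner than the one routed through Lemma \ref{lm1}.

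The final step is to linearize and globalize. Writing $\widehat{r^{p}\phi}(z)=\widehat{\phi}(z+p)$ and likewise for $\psi$, the displayed identity reads
\[
\widehat{r^{p}\phi}(2k+2)\,\widehat{r^{p}\psi}(2k+2p+2)=\widehat{r^{p}\phi}(2k+2p+2)\,\widehat{r^{p}\psi}(2k+2)\qquad(k\geq 0).
\]
The function $z\mapsto \widehat{r^{p}\phi}(z)\,\widehat{r^{p}\psi}(z+2p)-\widehat{r^{p}\phi}(z+2p)\,\widehat{r^{p}\psi}(z)$ is bounded and analytic on a right half-plane and vanishes along the arithmetic sequence $\{2k+2\}_{k\geq 0}$; since $\sum_{k}\tfrac{1}{2k+2}=\infty$, Theorem \ref{blk1} (in the form of Remark \ref{blk2}) forces it to vanish identically, i.e.
\[
\widehat{r^{p}\phi}(z)\,\widehat{r^{p}\psi}(z+2p)=\widehat{r^{p}\phi}(z+2p)\,\widehat{r^{p}\psi}(z)\qquad\text{for }\Re z>0 .
\]
Applying \cite[Lemma 6, p~1468]{l} to the pair $r^{p}\phi,\,r^{p}\psi$ then yields $r^{p}\phi=c\,r^{p}\psi$, hence $\phi=c\psi$.

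I expect the only genuine obstacle to lie in this globalization step: one must verify that the relevant combination of Mellin transforms is truly bounded and analytic on the half-plane so that Theorem \ref{blk1} applies, and that the hypotheses of \cite[Lemma 6, p~1468]{l} are met (in particular that $r^{p}\psi\not\equiv 0$). Everything else—the double application of Lemma \ref{lemdz}, the cancellation of the nonzero scalars, and the shift $\widehat{r^{p}\phi}(z)=\widehat{\phi}(z+p)$—is routine.
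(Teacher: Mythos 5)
Your proposal is correct and follows essentially the same route as the paper: specialize Equation \eqref{eq1} to $s=p$ (equivalently, apply Lemma \ref{lemdz} twice to $z^k$), extend the resulting identity on the arithmetic sequence to the whole half-plane via Remark \ref{blk2}, and invoke \cite[Lemma 6, p~1468]{l}, with adjoints handling $p<0$. The only cosmetic difference is that you shift to $r^p\phi$, $r^p\psi$ before applying the cited lemma, whereas the paper applies it directly to $\phi$, $\psi$; your remark that the statement should exclude $p=0$ is a fair observation.
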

\begin{proof}
Let $p > 0$ and assume $T_{e^{ip\theta}\phi}T_{e^{ip\theta}\psi} = T_{e^{ip\theta}\psi}T_{e^{ip\theta}\phi}$. Then Equation (\ref{eq1}) implies 
	\[\widehat{\phi}(2k+p+2)\widehat{\psi}(2k+p+2+2p) = \widehat{\phi}(2k+p+2+2p)\widehat{\psi}(2k+p+2),\ \forall k\geq 0.
\]
Now Theorem \ref{blk2} yields
	\[\widehat{\phi}(z)\widehat{\psi}(z+2p) = \widehat{\phi}(z+2p)\widehat{\psi}(z)\ \textrm{ for } \Re z>0.
\]
Therefore,  \cite[Lemma 6, p~1468]{l} provides $\phi = c\psi$.
If $p < 0$, the same result is obtained by taking the adjoint operators.
\end{proof}

Now we shall consider a Toeplitz operator with a monomial symbol. Its product with a quasihomogeneous Toeplitz operator might be either commutative or not. 
\begin{thm}\label{th6}
Let $p$, $s$ be two positive integers with $p \leq s$, and $\alpha$ be a positive real number. If there exists a radial function $\psi$ such that $T_{e^{is\theta}\psi}$ commutes with $T_{e^{ip\theta}r^\alpha}$, then either $s=p$ and $\psi = cr^\alpha$ for some constant $c$, or $\psi = 0$.
\end{thm}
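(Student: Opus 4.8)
The plan is to exploit the commutation equations of Lemma \ref{lm1} with $\phi = r^\alpha$, whose Mellin transform is explicit: $\widehat{r^\alpha}(z) = \int_0^1 r^\alpha r^{z-1}\,dr = 1/(z+\alpha)$, which never vanishes on the relevant half-plane. This is the key simplification: since $\widehat{\phi}(2k+p+2) = 1/(2k+p+2+\alpha)$ is never zero, the set $Z$ of its zeros is empty, so every identity in Lemma \ref{lm1} holds for \emph{all} admissible $k$ with no exceptional indices to discard. First I would substitute into Equation (\ref{eq1}) to get, for all $k \geq 0$,
\begin{equation*}
\frac{(k+p+1)}{2k+p+2+\alpha}\,\widehat{\psi}(2k+2p+s+2) = \frac{(k+s+1)}{2k+2s+p+2+\alpha}\,\widehat{\psi}(2k+s+2).
\end{equation*}
This is a two-step recursion relating $\widehat{\psi}$ at arguments differing by $2p$, with explicit rational coefficients.

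Next I would handle the case $p < s$ using Equation (\ref{eq4}), which is the identity special to the regime $p \leq s$. Substituting $\phi = r^\alpha$ gives, for all $p \leq k \leq s$,
\begin{equation*}
\frac{(k-p+1)}{2k-p+2+\alpha}\,\widehat{\psi}(s+2) = \frac{(s-k+1)}{2s-2k+p+2+\alpha}\,\widehat{\psi}(s+2).
\end{equation*}
Either $\widehat{\psi}(s+2) = 0$, or the scalar factors must agree, namely $\frac{k-p+1}{2k-p+2+\alpha} = \frac{s-k+1}{2s-2k+p+2+\alpha}$ for every integer $k$ in $[p,s]$. The strategy is to show this forces a contradiction when $p < s$: the two sides are distinct functions of $k$ (one increasing in $k$, the other decreasing), so they can agree at most at isolated values, not at all of the $s-p+1 \geq 2$ integers in the range. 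Cross-multiplying produces a polynomial identity in $k$ that cannot hold identically unless $p = s$, whence $\widehat{\psi}(s+2) = 0$ whenever $p < s$.

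Once $\widehat{\psi}(s+2)=0$ is established for $p<s$, I would feed this back into the recursion (\ref{eq1}) to propagate the vanishing. Setting $k$ so that one argument becomes $s+2$ and using that the coefficients are nonzero, I expect to conclude $\widehat{\psi}(2k+s+2)=0$ along an arithmetic progression, so that Remark \ref{blk2} forces $\psi = 0$. For the remaining case $p=s$, the theorem reduces to Theorem \ref{th5} applied to $\phi=r^\alpha$: commutation of $T_{e^{ip\theta}r^\alpha}$ with $T_{e^{ip\theta}\psi}$ yields $\psi = c\,r^\alpha$ directly. The main obstacle will be the propagation step: showing that a single vanishing value of $\widehat{\psi}$, together with the recursion having possibly zero-valued coefficients at sporadic $k$, genuinely spreads to a full arithmetic sequence of zeros. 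I would address this by checking that the coefficient $(k+p+1)/(2k+p+2+\alpha)$ stays strictly positive for all $k\geq 0$ and $\alpha>0$, so no links in the recursion chain are broken, allowing the zero to cascade. With the coefficients bounded away from zero, the cascade is clean and Remark \ref{blk2} applies.
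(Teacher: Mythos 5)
Your proposal is correct and follows essentially the same route as the paper: Equation (\ref{eq4}) with $\widehat{r^\alpha}(z)=1/(z+\alpha)$ forces either $p=s$ (then Theorem \ref{th5} gives $\psi=cr^\alpha$) or $\widehat{\psi}(s+2)=0$, after which the vanishing is propagated through Equation (\ref{eq1}) and Remark \ref{blk2} yields $\psi=0$. The only cosmetic difference is that the paper first uses Equation (\ref{eq6}) to obtain $\widehat{\psi}(2k+s+2)=0$ for $0\le k\le p$ before iterating, whereas you iterate (\ref{eq1}) directly from the single zero at $s+2$ along the arithmetic progression of step $2p$; both versions suffice for Remark \ref{blk2}.
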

\begin{proof}
First, let us  assume  that $\widehat{\psi}(s+2) \ne 0$. Since the Mellin coefficients of the monomial $\phi : r \mapsto r^\alpha$ are $\hat{\phi}(n) = \frac{1}{n+\alpha}$, for $n \geq 0$ and since  $T_{e^{ip\theta}r^\alpha}$ commutes with $T_{e^{is\theta}\psi}$,  Equation (\ref{eq4}) implies
	\[\frac{k-p+1}{2k-p+2+\alpha} = \frac{s-k+1}{2s-2k+p+2+\alpha},  \textrm{ for } p \leq k \leq s.
\]
Solving the above equality for $k$ yields to $2k = s+p$, for $k = p, p+1, \ldots, s$, which is impossible unless $p = s$. Using Theorem \ref{th5},  we conclude that $\psi = cr^\alpha$. 

Now, assume that $\widehat{\psi}(s+2) = 0$. Then Equality (\ref{eq6})  implies
\begin{equation}\label{k_0}
	\widehat{\psi}(2k+s+2) = 0, \textrm{ for } 0\leq k\leq p.
\end{equation}
Combining (\ref{eq1}) and (\ref{k_0}), we obtain
\begin{equation*}
\widehat{\psi}(2k+2p+s+2)=0, \textrm{ for all } 0\leq k\leq p,
\end{equation*}
or
\begin{equation}
\widehat{\psi}(2k+s+2)=0, \textrm{ for all } 0\leq k\leq 2p.
\end{equation}
Repeating the same argument, using each time Equation (\ref{eq1}), we show that
$$\widehat{\psi}(2k+s+2)=0, \textrm{ for all } k\geq 0.$$
Hence, Remark \ref{blk2} implies $\psi = 0$.
\end{proof}
In \cite{l}, the first author proved that for any choice of triple of positive integers $(m,p,s)$, there always exists  a radial function $\psi$ such that the Toeplitz operators $T_{e^{ip\theta}r^{(2m+1)p}}$ and $T_{e^{is\theta}\psi}$, defined on the analytic Bergman space of the unit disk, commute. We shall show that  it is not the case anymore for the analogous Toeplitz operators defined on $L^2_h$. 
\begin{thm}\label{th7}
Let $p$, $s$, $m$ be positive integers with $p > s > 0$ and $m \geq 0$, and let $\phi(r)=r^n$, where $n=(2m+1)p$.
\begin{itemize}
\item[(i)] If $p\geq m+1$ and if there exists a radial function $\psi$ such that $T_{e^{is\theta}\psi}$ commutes with $T_{e^{ip\theta}r^n}$, then $\psi$ must be the zero function.
\item[(ii)] If $p\leq m$, there exists a nonzero radial function $\psi$ such that  $T_{e^{is\theta}\psi}$ commutes with $T_{e^{ip\theta}r^n}$
\end{itemize}
\end{thm}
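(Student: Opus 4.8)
The plan is to specialise the necessary conditions of Lemma~\ref{lm1} to $\phi(r)=r^n$, where the only Mellin datum is $\widehat{\phi}(z)=1/(z+n)$, and then to phrase everything in terms of the sampled values $a_j:=\widehat{\psi}(2j+s+2)$, $j\ge 0$. The choice $n=(2m+1)p$ is exactly what makes the denominators telescope: $2k+p+2+n=2\big(k+(m+1)p+1\big)$ and $2k+2s+p+2+n=2\big(k+s+(m+1)p+1\big)$, so that both (\ref{eq1}) and (\ref{eq2}) collapse to the single recurrence
\[
a_{j+p}=\frac{(j+s+1)\big(j+(m+1)p+1\big)}{(j+p+1)\big(j+s+(m+1)p+1\big)}\,a_j,\qquad j\ge 0 .
\]
Because it advances the index by $p$, this recurrence propagates $a_j$ inside each residue class of $j$ modulo $p$ but does not relate different classes. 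The remaining conditions supply precisely the couplings between classes: (\ref{eq3}) and (\ref{eq6}) both reduce to one boundary law expressing $a_j$ for $p-s\le j\le p$ as an explicit multiple of $a_0$, and the counterpart of (\ref{eq4}) that is valid when $p>s$ (obtained by equating the two ways of sending $\overline z^{\,k}$ back to the analytic part for $s\le k\le p$) gives the reflection $(j+1)a_j=(p-s-j+1)\,a_{p-s-j}$ for $0\le j\le p-s$. These last two families are exactly the genuinely ``harmonic'' constraints that are invisible on the analytic Bergman space.

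For part~(i) I would show that, under $p\ge m+1$, this over-determined system forces $a_j=0$ for every $j$, after which Remark~\ref{blk2} gives $\psi=0$. Following the pattern of the proof of Theorem~\ref{th6}, I would split according to whether $\widehat{\psi}(s+2)=a_0$ vanishes. If $a_0\neq 0$, the boundary law fixes $a_{p-s},\dots,a_{p}$, the recurrence then determines the whole tower above them, and comparing these values against the reflection produces an algebraic identity in the single scalar $a_0$; the role of $p\ge m+1$ is to guarantee that the factor $j+(m+1)p+1$ created by $n=(2m+1)p$ never cancels the indices produced by the reflection about $(p-s)/2$, so that the identity is contradictory and rules out $a_0\neq 0$. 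If instead $a_0=0$, I would feed this into the boundary law and the recurrence to cascade the vanishing up the entire tower, exactly as in Theorem~\ref{th6}, where $\widehat{\psi}(s+2)=0$ propagates through (\ref{eq1}) and (\ref{eq6}), concluding $a_j=0$ for all $j$.

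For part~(ii), when $p\le m$ I would instead exhibit a nonzero $\psi$. The natural ansatz is a combination of powers of $r$ whose Mellin transform $\widehat{\psi}(z)=\sum_i c_i/(z+\gamma_i)$ is forced, through the telescoping above, to satisfy the recurrence automatically; the free coefficients $c_i$ are then pinned down by the finite system coming from the boundary law and the reflection. The inequality $p\le m$ is what provides enough exponents $\gamma_i$, equivalently enough coefficients, for that finite homogeneous system to admit a nontrivial solution, so that a genuine nonzero radial $\psi$ results; I would then verify it directly against all the equations of Lemma~\ref{lm1} by means of Lemma~\ref{lemdz}.

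The hard part will be the bookkeeping in part~(i): one must show that the reflection relation and the boundary law, once pushed through the degree-$p$ recurrence, are jointly inconsistent with $a_0\neq 0$ exactly when $p\ge m+1$, and this hinges on matching the linear factors created by $n=(2m+1)p$ against the indices occurring in the reflection and in the boundary law. Locating the threshold precisely at $p=m+1$, rather than at some coarser bound, is the delicate point, and it is also what marks the passage from the rigid regime of part~(i) to the flexible regime of part~(ii).
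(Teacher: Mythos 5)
Your reduction to the sampled Mellin values $a_j=\widehat{\psi}(2j+s+2)$ and the degree-$p$ recurrence is the right starting point, but the proposal is missing the step that actually produces the threshold $p=m+1$: determining the \emph{form} of $\psi$. In the paper, equation (\ref{eq22}) is first analytically continued (via Theorem \ref{blk1}) to the functional equation $\widehat{r^s\psi}(z+2p)/\widehat{r^s\psi}(z)=F(z+2p)/F(z)$ on the whole right half-plane, where $F(z)=\Gamma(\tfrac{z}{2p}+\tfrac{s}{p})\Gamma(\tfrac{z}{2p}+m+1)\big/\bigl(\Gamma(\tfrac{z}{2p}+\tfrac{s}{p}+m+1)\Gamma(\tfrac{z}{2p}+1)\bigr)$; the choice $n=(2m+1)p$ is exactly what makes this Gamma factorization possible. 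Then \cite[Lemma 6]{l} forces $\widehat{r^s\psi}=CF$, and the partial-fraction expansion of $F$ shows that $\psi$ must lie in the $(m+1)$-dimensional space spanned by $r^{s+2jp}$, $0\le j\le m$. Only after that does the finite system coming from (\ref{eq24})--(\ref{eq26}) become a homogeneous linear system of rank $p$ in $m+1$ unknowns $c_0,\dots,c_m$, which is where the dichotomy $p\ge m+1$ versus $p\le m$ comes from. In your scheme the integer $m$ never enters the finite constraint system except through the recurrence coefficients, and your stated reason why $p\ge m+1$ forces a contradiction (non-cancellation of the factor $j+(m+1)p+1$ against the reflection indices) does not correspond to a workable argument.

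Concretely, the cascading argument you propose for part (i) fails because here $p>s$: the recurrence from (\ref{eq22}) couples only indices within a fixed residue class modulo $p$, and the boundary relations (\ref{eq24})--(\ref{eq26}) reach only $s<p$ of those classes from $a_0$. So $a_0=0$ propagates to $a_p,a_{2p},\dots$ and to $a_{p-s+1},\dots,a_p$, but the remaining residue classes are untouched; this is precisely where the analogy with Theorem \ref{th6} breaks down, since there $\min(p,s)=p$ yields a full period of vanishing before the recurrence takes over. Without first pinning $\psi$ to the $(m+1)$-dimensional monomial space, neither the contradiction for $a_0\neq 0$ nor the vanishing cascade for $a_0=0$ can be completed, and part (ii) likewise needs the explicit exponents $s+2jp$ (and the fact that there are exactly $m+1$ of them against $p$ independent constraints) in order to exhibit a nonzero solution.
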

\begin{proof}
If $T_{e^{ip\theta}\phi}$ commutes with $T_{e^{is\theta}\psi}$, then we must have 
$$T_{e^{ip\theta}\phi}T_{e^{is\theta}\psi}(z^k)=T_{e^{is\theta}\psi}T_{e^{ip\theta}\phi}(z^k),\textrm{ for all }k\geq 0,$$
and 
$$T_{e^{ip\theta}\phi}T_{e^{is\theta}\psi}(\bar{z}^k)=T_{e^{is\theta}\psi}T_{e^{ip\theta}\phi}(\bar{z}^k),\textrm{ for all }k\geq 0.$$
Therefore, by  Lemma \ref{lm1} and since $\widehat{\phi}(z)=\frac{1}{z+n}$, we obtain the following equalities
\begin{equation}\label{eq22}
2(k+s+1)\displaystyle{\frac{\widehat{\psi}(2k+s+2)}{2k+p+2s+n+2}}=2(k+p+1)\displaystyle{\frac{\widehat{\psi}(2k+2p+s+2)}{2k+p+n+2}},\forall k\geq 0.
\end{equation}
\begin{equation}\label{eq23}
2(k-s+1)\frac{\widehat{\psi}(2k-s+2)}{2k-p-2s+n+2}=2(k-p+1)\displaystyle{\frac{\widehat{\psi}(2k-2p-s+2)}{2k-p+n+2}},\forall k\geq p+s.
\end{equation}
\begin{equation}\label{eq24}
2(k-s+1)\frac{\widehat{\psi}(2k-s+2)}{p+n+2}=2(k-p+1)\displaystyle{\frac{\widehat{\psi}(s+2)}{2k-p+n+2}},\forall p\leq k<p+s.
\end{equation}
\begin{equation}\label{eq25}
2(k-s+1)\frac{\widehat{\psi}(2k-s+2)}{p+n+2}=2(p-k+1)\displaystyle{\frac{\widehat{\psi}(2p+s-2k+2)}{p+n+2}},\forall s\leq k<p.
\end{equation}
\begin{equation}\label{eq26}
(s-k+1)\frac{\widehat{\psi}(s+2)}{p+2s-2k+n+2} =(p-k+1)\displaystyle{\frac{\widehat{\psi}(2p+s-2k+2)}{p+n+2}},\forall 0\leq k<s.
\end{equation}
Now it is easy to see that equations \eqref{eq22} and \eqref{eq23} are equivalent. In fact, by taking $j=k-p-s$ in Equation \eqref{eq23}, we obtain Equation \eqref{eq22}. We shall then use Equation \eqref{eq22} to determine the form of the radial symbol $\psi$. By setting $z=2k+2$, we complexify Equation \eqref{eq22} and we obtain
\begin{equation*}
\frac{z+2s}{z+p+2s+n}\widehat{\psi}(z+s)=\frac{z+2p}{z+p+n}\widehat{\psi}(z+2p+s) \textrm{ for } \Re z>0.
\end{equation*}
Here, we notice that the function
$$f(z)=\frac{z+2s}{z+p+2s+n}\widehat{\psi}(z+s)-\frac{z+2p}{z+p+n}\widehat{\psi}(z+2p+s)$$
is analytic and bounded in the right half-plane and vanishes for $z=2k+2$, for any $k\geq 0$. Hence, by Theorem \ref{blk1}, we have $f(z)\equiv 0$. Therefore, we obtain that in the right half-plane
\begin{equation}\label{period}
\frac{\widehat{r^s\psi}(z+2p)}{\widehat{r^s\psi}(z)}=\frac{(z+2s)(z+p+n)}{(z+p+2s+n)(z+2p)}, \textrm{ for } \Re z>0.
\end{equation}
Since $n=(2m+1)p$ and using the well-known identity $\Gamma(z+1)=z\Gamma(z)$, where $\Gamma$ is the Gamma function, we can rewrite Equation (\ref{period}) as
\begin{equation}\label{gamma}
\frac{\widehat{r^s\psi}(z+2p)}{\widehat{r^s\psi}(z)}=\frac{F(z+2p)}{F(z)}\textrm{ for }\Re z>0,
\end{equation}
where $F(z)=\displaystyle{\frac{\Gamma(\frac{z}{2p}+\frac{s}{p})\Gamma(\frac{z}{2p}+m+1)}{\Gamma(\frac{z}{2p}+\frac{s}{p}+m+1)\Gamma(\frac{z}{2p}+1)}}$. Next, Equation (\ref{gamma}), combined with \cite[Lemma 6, p~1468]{l}, implies there exists a constant C such that
\begin{equation}\label{equal}
\widehat{r^s\psi}(z)=CF(z),\textrm{ for }\Re z>0.
\end{equation}
Now, we shall show that $F(z)$ is the Mellin transform of a bounded function. Using the well-known property of the Gamma function namely $$\Gamma(z+n)=(z+n-1)(z+n-2)\ldots z\Gamma(z)\textrm{ for }n\in\mathbb{N},$$ and after simplification, we obtain that
$$F(z)=\frac{(\frac{z}{2p}+m)\ldots(\frac{z}{2p}+1)}{(\frac{z}{2p}+\frac{s}{p}+m)\ldots(\frac{z}{2p}+\frac{s}{p})},$$
which is a proper fraction in $z$ and can be written as sum of partial fractions
$$F(z)=\sum_{j=0}^{m}\frac{a_j}{z+2s+2jp}=\sum_{j=0}^{m}a_j\widehat{r^{2s+2jp}}(z).$$
Therefore, Equation (\ref{equal}) and Remark \ref{blk2} imply that
\begin{equation}\label{form}
\psi(r)=\sum_{j=0}^{m}c_jr^{s+2jp}.
\end{equation}
At this point, let us summarize what we have done so far. We proved that if there exists a radial function $\psi$ such that $T_{e^{is\theta}\psi}$ commutes with $T_{e^{ip\theta}r^n}$, then $\psi$ is given by Equation (\ref{form}). The rest of the proof will be dedicated to whether or not there exist nonzero coefficients $c_j$, $ 0\leq j\leq m$, such that $\psi$ verifies equations (14), (15) and (16). In fact,  since $\displaystyle{\widehat{\psi}(z)=\sum_{j=0}^{m}\frac{1}{z+s+2jp}}$, these three equations can be written as a homogeneous linear system in the following way
\begin{equation}\label{(S)}
\left(\begin{array}{c}
A\\
----\\
B\\
----\\
C\end{array}\right)\left(\begin{array}{c}c_0\\ \vdots\\c_m\end{array}\right)=
\left(\begin{array}{c}0\\ \vdots\\ 0 \end{array}\right),
\end{equation}
where
\begin{itemize}
\item[$\ast$] The block $A$ is of size $s\times (m+1)$ and its entries are given by:
$$a_{kj}=\frac{s-k+1}{(p+2s-2k+n+2)(s+jp+1)}-\frac{p-k+1}{(p+n+2)(p+s-k+jp+1)},$$
 for $0\leq k<s$  and  $0\leq j\leq m$.
\item[$\ast$] The block $B$ is of size $(p-s)\times(m+1)$  and its entries are given by:
$$b_{kj}=\frac{p-k+1}{(j+1)p+s-k+1}-\frac{k-s+1}{k+jp+1},$$
for $s\leq k<p$ and $0\leq j\leq m$.
\item[$\ast$] The block $C$ is of size $s\times (m+1)$ and its entries are given by:
$$c_{kj}=\frac{k-p+1}{(2k-p+n+2)(s+jp+1)}-\frac{k-s+1}{(p+n+2)(k+jp+1)},$$
for $p\leq k<p+s$ and $0\leq j\leq m$.
\end{itemize}
Before discussing the existence of solutions for our homogeneous system (\ref{(S)}), we will make three crucial observations:
\begin{itemize}
\item[$\centerdot$] $a_{s-l,j}=c_{p+l,j},$ for all $1\leq l\leq s-1$ and $0\leq j\leq m$,
 \item[$\centerdot$] $c_{pj}=-\frac{b_{sj}}{p+n+2}$, for all $0\leq j\leq m$,
 \item[$\centerdot$] The rows of the blocks $A$ and $B$ are linearly independent.
\end{itemize}
Hence we can reduce the system (\ref{(S)}) to only the blocks $A$ and $B$ and get rid of the block $C$. Moreover our system will always be  of rank $p$ (the $s$ rows of the block $A$ are linearly independent of the $p-s$ rows of the block $B$). Finally, we conclude by saying the following:
\begin{itemize}
\item[(i)] If $p\geq m+1$, the homogeneous system (\ref{(S)}) has more linearly independent equations than unknowns. In this case the only possible solution is the trivial solution i.e. $c_j=0$ for all $0\leq j\leq m$, and therefore $\psi$ is the zero function.
\item[(ii)] If $p\leq m$, the system  (\ref{(S)})  has less equations than unknowns. In this case, there exist nonzero coefficients $c_j$ which verifies (\ref{(S)}), and hence $\psi$  exists. 
\end{itemize}
\end{proof}

\textbf{Acknowledgments. } The authors would like to thank separately Elizabeth Strouse and Trieu Le for their valuable comments and useful discussions. The second author is grateful to Institut de Math\'{e}matiques de Bordeaux UMR 5251, which hosted her for a scientific stay while a part of this article was in progress.

{\small
}


\begin{thebibliography}{9}
\bibitem{cl1} B.R.Choe and Y.J. Lee, Commuting Toeplitz operators on the harmonic Bergman space, \emph{Michigan Math. J.} \textbf{46} (1999), 163 - 174.
\bibitem{cl2}  B.R.Choe and Y.J. Lee, Commutants of analytic Toeplitz operators on the harmonic Bergman space, \emph{Integral Equations Operator Theory} \textbf{50} (2004) 559 - 564. 
\bibitem{cr} \u Z. \u Cu\u ckovi\'c and N. V. Rao, Mellin transform,
monomial Symbols, and commuting Toeplitz operators, \emph{J.
Funct. Anal.} {\bf{154}} (1998), 195-214.
\bibitem{D} X. Ding, A question of Toeplitz operators on the harmonic Bergman space, \emph{J. Math. Anal. Appl.}\textbf{344}(2008) 367 - 372.
\bibitem{dz} X. T. Dong and Z. H Zhou, Products of Toeplitz operators on the harmonic Bergman space, \emph{Proc. Amer. Math. Soc.} {\bf{138}}, (2010), 1765-1773.
\bibitem{l} I. Louhichi, Powers and roots of Toeplitz operators,
\emph{Proc. Amer. Math. Soc.} {\bf{135}}, (2007), 1465-1475.
\bibitem{lsz} I. Louhichi, E. Strouse and L. Zakariasy, Products of
Toeplitz operators on the Bergman space, \emph{Integral Equations
Operator Theory} {\bf{54}} (2006), 525-539.
\bibitem{lz} I. Louhichi and L. Zakariasy, On Toeplitz operators with quasihomogeneous symbols, \emph{Arch. Math.} \textbf{85} (2005), 248-257.
\bibitem{rem} R. Remmert, Classical Topics in Complex Function
Theory, \emph{Graduate Texts in Mathematics}, Springer, New York,
1998.

\end{thebibliography}
\end{document}